\documentclass[reqno]{amsart}
\usepackage{amsmath, amsthm, amssymb, amstext}

\usepackage{hyperref,xcolor}
\hypersetup{
 pdfborder={0 0 0},
 colorlinks,
}
\usepackage{enumitem}
\setlength{\parindent}{1.2em}


\newtheorem{theorem}{Theorem}
\newtheorem{remark}[theorem]{Remark}

\newtheorem{proposition}[theorem]{Proposition}

\newtheorem{example}[theorem]{Example}


              %
              %
              %
\DeclareMathOperator*{\divergenz}{div}              %
\DeclareMathOperator*{\ints}{int}         %
\DeclareMathOperator*{\Ss}{S}

\newcommand{\R}{\mathbb{R}}

\newcommand{\Lp}[1]{L^{#1}(\Omega)}

\newcommand{\Wp}[1]{W^{1,#1}(\Omega)}

\newcommand{\Wpzero}[1]{W^{1,#1}_0(\Omega)}

\newcommand{\eps}{\varepsilon}
\newcommand{\ph}{\varphi}

\newcommand{\into}{\int_{\Omega}}

\newcommand{\Linf}{L^{\infty}(\Omega)}
\newcommand{\close}{\overline{\Omega}}
\newcommand{\interior}{\ints \left(C^1_0(\overline{\Omega})_+\right)}

\numberwithin{theorem}{section}
\numberwithin{equation}{section}


\title[Constant sign solutions for double phase problems]{Constant sign solutions for double phase problems with superlinear nonlinearity}

\author[L.\,Gasi\'nski]{Leszek Gasi\'nski}
\address[L.\,Gasi\'nski]{Pedagogical University of Cracow, Department of Mathematics, Podchorazych 2, 30-084 Cracow, Poland}
\email{leszek.gasinski@up.krakow.pl}

\author[P.\,Winkert]{Patrick Winkert}
\address[P.\,Winkert]{Technische Universit\"{a}t Berlin, Institut f\"{u}r Mathematik, Stra\ss e des 17.\,Juni 136, 10623 Berlin, Germany}
\email{winkert@math.tu-berlin.de}

\subjclass[2010]{35J15, 35J62, 35J92, 35P30}
\keywords{Double phase problems, existence results, multiple solutions}

\begin{document}

\begin{abstract}
        We study parametric double phase problems involving superlinear nonlinearities with a growth that need not necessarily be polynomial. Based on truncation and comparison methods the existence of two constant sign solutions is shown provided the parameter is larger than the first eigenvalue of the $p$-Laplacian. As a result of independent interest we prove a priori estimates for solutions for a general class of double phase problems with convection term.
\end{abstract}
	
\maketitle
	
\section{Introduction}

Given a bounded domain $\Omega \subseteq \R^N$, $N\geq 2$, with a $C^2$-boundary $\partial \Omega$, we consider the following double phase problem
\begin{equation}\label{problem}
    \begin{aligned}
	-\divergenz\left(|\nabla u|^{p-2}\nabla u+\mu(x) |\nabla u|^{q-2}\nabla u\right) & =\lambda |u|^{p-2}u-f(x,u)\quad && \text{in } \Omega,\\
	u & = 0 &&\text{on } \partial \Omega,
    \end{aligned}
\end{equation}
where $1<p<q<N$, $\lambda>0$ is a parameter specified later, the function $\mu \colon\close \to [0,\infty)$ is supposed to be Lipschitz continuous and $f\colon\Omega\times\R\to \R$ is a Carath\'{e}odory function, that is, $x\mapsto f(x,s)$ is measurable for all $s\in\R$ and $s\mapsto f(x,s)$ is continuous for almost all (a.\,a.) $x\in\Omega$.

The main goal of this paper is an existence result for problem \eqref{problem} which provides two constant sign solutions, one positive and the other negative. The novelty of our work is the fact that the nonlinearity $f\colon\Omega\times\R\to \R$  is only $(p-1)$-superlinear at infinity, $(p-1)$-sublinear near zero and bounded on bounded sets. No polynomial growth condition or any monotonicity condition is needed in our proof which is in contrast to other works in this direction, see for example the recent paper of Gasi\'nski-Papageorgiou \cite[Proposition 3.4]{Gasinski-Papageorgiou-2019}. In addition, we present a priori bounds for weak solutions of problem \eqref{problem} which are based on the recent work of Marino-Winkert \cite{Marino-Winkert-2019} by applying Moser's iteration.

Problems of type \eqref{problem} have an interesting phenomena because the operator on the left-hand side is the so-called double phase operator whose behavior switches between two different elliptic situations depending on the values of the weight function $\mu\colon\close\to [0,\infty)$. In other words, the behavior of the operator is controlled by the sets $\{x\in \Omega: \mu(x)=0\}$ and $\{x\in \Omega: \mu(x) \neq 0\}$.

Zhikov was one of the first who introduced such classes of operators to describe models of strongly anisotropic materials, see \cite{Zhikov-1986}, \cite{Zhikov-1995}, \cite{Zhikov-1997}  and also the monograph of Zhikov-Kozlov-Oleinik \cite{Zhikov-Kozlov-Oleinik-1994}. The main idea was the introduction of the functional
\begin{align}\label{integral_minimizer}
    u \mapsto \int \left(|\nabla u|^p+\mu(x)|\nabla u|^q\right)\,dx,
\end{align}
in order to describe such phenomena. Such functionals have been intensively studied in the past decade. We refer to the papers of Baroni-Colombo-Mingione \cite{Baroni-Colombo-Mingione-2015}, \cite{Baroni-Colombo-Mingione-2016}, \cite{Baroni-Colombo-Mingione-2018}, Baroni-Kussi-Mingione \cite{Baroni-Kussi-Mingione-2015}, Colombo-Mingione \cite{Colombo-Mingione-2015a}, \cite{Colombo-Mingione-2015b} and the references therein concerning the regularity. We also point out that the integrals of the form \eqref{integral_minimizer} arise in the context of functionals with non-standard growth; see the works of Cupini-Marcellini-Mascolo \cite{Cupini-Marcellini-Mascolo-2015} and Marcellini \cite{Marcellini-1989}, \cite{Marcellini-1991}.

The existence of solutions for classes of problem \eqref{problem} has only been studied by few authors. Perera-Squassina \cite{Perera-Squassina-2019} proved the existence of a solution of problem \eqref{problem} by applying Morse theory where they used a cohomological local splitting to get an estimate of the critical groups at zero. The corresponding eigenvalue problem of the double phase operator with Dirichlet boundary condition was treated by Colasuonno-Squassina \cite{Colasuonno-Squassina-2016}, who proved the existence and properties of related variational eigenvalues. By applying variational methods, Liu-Dai \cite{Liu-Dai-2018} treated double phase problems and proved existence and multiplicity results, also sign-changing solutions. A similar treatment has been recently done by Gasi\'nski-Papageorgiou \cite[Proposition 3.4]{Gasinski-Papageorgiou-2019} via the Nehari manifold method. We point out that the proof for the constant sign solutions in \cite{Gasinski-Papageorgiou-2019} needs an additional monotonicity condition which can be avoided in our work. Furthermore, we refer to a recent work of the authors \cite{Gasinski-Winkert-2019} which shows the existence of at least one solution of problems of type \eqref{problem} by applying the surjectivity result for pseudomonotone operators. This can be realized by an easy condition on the convection term, in addition to the usual growth condition. Finally we refer to works which are very related to our topic dealing with types of double phase problems. We mention, for example, Bahrouni-R\u{a}dulescu-Repov\v{s} \cite{Bahrouni-Radulescu-Repovs-2019}, Cencelj-R\u{a}dulescu-Repov\v{s} \cite{Cencelj-Radulescu-Repovs-2018}, Papageorgiou-R\u{a}dulescu-Repov\v{s} \cite{Papageorgiou-Radulescu-Repovs-2019a}, \cite{Papageorgiou-Radulescu-Repovs-2019b}, Papageorgiou-Scapellato \cite{Papageorgiou-Scapellato-2020}, Zhang-R\u{a}dulescu \cite{Zhang-Radulescu-2018} and the references therein. We also refer to the interesting overview article of R\u{a}dulescu \cite{Radulescu-2019} concerning isotropic and anisotropic double phase problems.

Our results are mainly based on truncation and comparison methods combined with the representation of the first eigenvalue of the $p$-Laplacian with homogeneous Dirichlet boundary condition, see \eqref{eigenvalue_problem} in Section \ref{section_2}. Moreover, the $L^\infty$-bounds for weak solutions stated in Section \ref{section_3} are based on Moser's iteration and give a priori bounds for a very general setting of problem \eqref{problem} including convection term, see \eqref{problem2}.

\section{Preliminaries}\label{section_2}

By $\Lp{r}$ and $L^r(\Omega;\R^N)$ we denote the usual Lebesgue spaces endowed with the norm denoted by $\|\cdot\|_r$ for $1\leq r\leq\infty$ while $\Wp{r}$ and $\Wpzero{r}$ are the Sobolev spaces equipped with the norms denoted by $\|\cdot\|_{1,r}$ and $\|\cdot\|_{1,r,0}$, respectively, for $1\leq r\leq\infty$.

Let $1<p<q<N$ and suppose
\begin{align}\label{condition_poincare}
    \frac{q}{p}<1+\frac{1}{N}, \qquad \mu\colon \close\to [0,\infty) \text{ is Lipschitz continuous}.
\end{align}
Let $\mathcal{H}\colon \Omega \times [0,\infty)\to [0,\infty)$ be the function
\begin{align*}
    (x,t)\mapsto t^p+\mu(x)t^q.
\end{align*}
Then, the Musielak-Orlicz space $L^\mathcal{H}(\Omega)$ is defined by
\begin{align*}
    L^\mathcal{H}(\Omega)=\left \{u ~ \Big | ~ u: \Omega \to \R \text{ is measurable and } \rho_{\mathcal{H}}(u):=\into \mathcal{H}(x,|u|)\,dx< +\infty \right \}
\end{align*}
equipped with the Luxemburg norm
\begin{align*}
    \|u\|_{\mathcal{H}} = \inf \left \{ \tau >0 : \rho_{\mathcal{H}}\left(\frac{u}{\tau}\right) \leq 1  \right \}.
\end{align*}
With this norm, it is clear that the space $L^\mathcal{H}(\Omega)$ is uniformly convex and so reflexive. Moreover, we introduce the seminormed space
\begin{align*}
    L^q_\mu(\Omega)=\left \{u ~ \Big | ~ u: \Omega \to \R \text{ is measurable and } \into \mu(x) |u|^q \,dx< +\infty \right \}
\end{align*}
endowed with the seminorm
\begin{align*}
    \|u\|_{q,\mu} = \left(\into \mu(x) |u|^q \,dx \right)^{\frac{1}{q}}.
\end{align*}
We know from Colasuonno-Squassina \cite[Proposition 2.15 (i), (iv) and (v)]{Colasuonno-Squassina-2016} that the embeddings
\begin{align*}
    \Lp{q} \hookrightarrow L^\mathcal{H}(\Omega) \hookrightarrow \Lp{p} \cap L^q_\mu(\Omega)
\end{align*}
are continuous. A simple calculation shows that
\begin{align}\label{inequality_lp}
    \min \left\{\|u\|_\mathcal{H}^p,\|u\|_\mathcal{H}^q \right\} \leq \|u\|_p^p+\|u\|^q_{q,\mu}
    \leq \max\left \{\|u\|_\mathcal{H}^p,\|u\|_\mathcal{H}^q\right\}
\end{align}
for all $u\in L^\mathcal{H}(\Omega)$. By $W^{1,\mathcal{H}}(\Omega)$ we denote the corresponding Sobolev space which is defined by
\begin{align*}
    W^{1,\mathcal{H}}(\Omega)= \left \{u \in L^\mathcal{H}(\Omega) : |\nabla u| \in L^{\mathcal{H}}(\Omega) \right\}
\end{align*}
equipped with the norm
\begin{align*}
    \|u\|_{1,\mathcal{H}}= \|\nabla u \|_{\mathcal{H}}+\|u\|_{\mathcal{H}},
\end{align*}
where $\|\nabla u\|_\mathcal{H}=\||\nabla u|\|_{\mathcal{H}}$.

The completion of $C^\infty_0(\Omega)$ in $W^{1,\mathcal{H}}(\Omega)$ is denoted by $W^{1,\mathcal{H}}_0(\Omega)$ and due to \eqref{condition_poincare} we have an equivalent norm on $W^{1,\mathcal{H}}_0(\Omega)$ given by
\begin{align*}
    \|u\|_{1,\mathcal{H},0}=\|\nabla u\|_{\mathcal{H}},
\end{align*}
see Proposition 2.18 in Colasuonno-Squassina \cite{Colasuonno-Squassina-2016}. Since both spaces $W^{1,\mathcal{H}}(\Omega)$ and $W^{1,\mathcal{H}}_0(\Omega)$ are uniformly convex, we know that they are reflexive Banach spaces.

From Colasuonno-Squassina \cite[Proposition 2.15]{Colasuonno-Squassina-2016} we have the compact embedding
\begin{align*}
    W^{1,\mathcal{H}}_0(\Omega) \hookrightarrow \Lp{r}
\end{align*}
for each $1<r<p^*$, where $p^*$ is the critical exponent to $p$ given by
\begin{align}\label{critical_exponent}
    p^*:=\frac{Np}{N-p}.
\end{align}
A direct consequence of \eqref{inequality_lp} leads to the following inequalities
\begin{align*}
    \min \left\{\|u\|_{1,\mathcal{H},0}^p,\|u\|_{1,\mathcal{H},0}^q \right\} \leq \|\nabla u\|_p^p+\|\nabla u\|^q_{q,\mu}
    \leq \max\left \{\|u\|_{1,\mathcal{H},0}^p,\|u\|_{1,\mathcal{H},0}^q\right\}
\end{align*}
for all $u\in W^{1,\mathcal{H}}_0(\Omega)$.

As usual, we denote by $C^1_0(\overline{\Omega})$ the ordered Banach space
\begin{align*}
    C^1_0(\overline{\Omega})= \left\{u \in C^1(\overline{\Omega}): u\big|_{\partial \Omega}=0 \right\},
\end{align*}
with positive cone
\begin{align*}
    C^1_0(\overline{\Omega})_+=\left\{u \in C^1_0(\overline{\Omega}): u(x) \geq 0 \,\,\forall x \in \overline{\Omega}\right\}.
\end{align*}
This cone has a nonempty interior given by
\begin{align*}
    \interior=\left\{u \in C^1_0(\overline{\Omega}): u(x)>0 \,\,\forall x \in  \Omega \text{ and } \frac{\partial u}{\partial n}(x)<0 \,\,\forall x \in  \partial \Omega \right\},
\end{align*}
where $n=n(x)$ is the outer unit normal at $x \in \partial \Omega$.

Next, let us recall some basic facts about the spectrum of the negative Dirichlet $r$-Laplacian with $1<r<\infty$. We consider the problem
\begin{equation}\label{eigenvalue_problem}
    \begin{aligned}
	-\Delta_r u& =\lambda|u|^{r-2}u\quad && \text{in } \Omega,\\
	u
       & = 0  &&\text{on } \partial \Omega.
    \end{aligned}
\end{equation}

A number $\lambda \in \R$ is an eigenvalue of $\left(-\Delta_r,W^{1,r}_0(\Omega)\right)$ if problem \eqref{eigenvalue_problem} has a nontrivial solution $u \in W^{1,r}_0(\Omega)$ which is called an eigenfunction corresponding to the eigenvalue $\lambda$. We denote by $\sigma_r$ the set of eigenvalues of $\left(-\Delta_r,W^{1,r}_0(\Omega)\right)$. We know that the set $\sigma_r$ has a smallest element $\lambda_{1,r}$ which is positive, isolated, simple and it can be variationally characterized through
    \begin{align*}
	\lambda_{1,r} =\inf \left \{ \frac{\|\nabla u\|_{r}^r}{\|u\|_{r}^r}: u \in W^{1,r}_0(\Omega), u \neq 0 \right \}.
    \end{align*}

We refer to L{\^e} \cite{Le-2006} as a reference for these properties. In what follows we denote by $u_{1,r}$ the $L^r$-normalized (i.e., $\|u_{1,r}\|_{r}=1$) positive eigenfunction corresponding to $\lambda_{1,r}$. The nonlinear regularity theory and the nonlinear maximum principle imply that $u_{1,r} \in \interior$, see Lieberman \cite{Lieberman-1988} and Pucci-Serrin \cite{Pucci-Serrin-2007}.

Let $A\colon \Wpzero{\mathcal{H}}\to \Wpzero{\mathcal{H}}^*$ be the operator defined by
\begin{align}\label{operator_representation}
    \langle A(u),v\rangle_{\mathcal{H}} :=\into \left(|\nabla u|^{p-2}\nabla u+\mu(x)|\nabla u|^{q-2}\nabla u \right)\cdot\nabla v \,dx,
\end{align}
for $u,v\in \Wpzero{\mathcal{H}}$,
where $\langle \cdot,\cdot\rangle_{\mathcal{H}}$ is the duality pairing between $\Wpzero{\mathcal{H}}$ and its dual space $\Wpzero{\mathcal{H}}^*$. The properties of the operator $A\colon \Wpzero{\mathcal{H}}\to \Wpzero{\mathcal{H}}^*$ are summarized in the following proposition, see Liu-Dai \cite{Liu-Dai-2018}.

\begin{proposition}
    The operator $A$ defined by \eqref{operator_representation} is bounded, continuous, monotone (hence maximal monotone) and of type $(\Ss_+)$.
\end{proposition}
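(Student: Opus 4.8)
The plan is to verify the four properties in turn, the first three being essentially routine while the type $(\Ss_+)$ condition requires the main effort. For \emph{boundedness}, I would estimate $|\langle A(u),v\rangle_{\mathcal{H}}|$ by applying H\"older's inequality to each of the two summands separately: the first term is controlled by $\|\nabla u\|_p^{p-1}\|\nabla v\|_p$ and the second by $\|\nabla u\|_{q,\mu}^{q-1}\|\nabla v\|_{q,\mu}$. Passing to the $W^{1,\mathcal{H}}_0(\Omega)$-norm through the consequence of \eqref{inequality_lp} recorded after \eqref{critical_exponent} then yields a bound of the form $\|A(u)\|_{*}\leq c\bigl(\|u\|_{1,\mathcal{H},0}^{p-1}+\|u\|_{1,\mathcal{H},0}^{q-1}\bigr)$, so $A$ maps bounded sets to bounded sets. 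For \emph{continuity}, if $u_n\to u$ in $W^{1,\mathcal{H}}_0(\Omega)$, then $\nabla u_n\to\nabla u$ in the relevant norms, and since the maps $\xi\mapsto|\xi|^{p-2}\xi$ and $\xi\mapsto|\xi|^{q-2}\xi$ are continuous, a subsequence with a.e. convergence together with a dominating function furnished by \eqref{inequality_lp} gives $A(u_n)\to A(u)$ in the dual; a standard subsequence argument upgrades this to the full sequence.

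For \emph{monotonicity}, I would invoke the elementary inequality $\bigl(|\xi|^{r-2}\xi-|\eta|^{r-2}\eta\bigr)\cdot(\xi-\eta)\geq 0$, valid for all $\xi,\eta\in\RN$ and every $r>1$, applied with $r=p$ and with $r=q$ (the latter weighted by $\mu\geq 0$) and then integrated, giving $\langle A(u)-A(v),u-v\rangle_{\mathcal{H}}\geq 0$. Since $A$ is in addition continuous, hence hemicontinuous, the standard criterion that a monotone, hemicontinuous operator from a reflexive Banach space into its dual is maximal monotone yields maximal monotonicity.

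The heart of the matter is the $(\Ss_+)$ property. Assume $u_n\weak u$ in $W^{1,\mathcal{H}}_0(\Omega)$ and $\lims\langle A(u_n),u_n-u\rangle_{\mathcal{H}}\leq 0$. Using monotonicity I would write $\langle A(u_n),u_n-u\rangle_{\mathcal{H}}\geq\langle A(u),u_n-u\rangle_{\mathcal{H}}$, and since $A(u)$ is a fixed functional with $u_n-u\weak 0$, the right-hand side tends to $0$; combined with the hypothesis this forces $\lim_{n\to\infty}\langle A(u_n)-A(u),u_n-u\rangle_{\mathcal{H}}=0$. As both integrands in this pairing are pointwise nonnegative, the $p$-part and the $\mu$-weighted $q$-part each tend to zero separately. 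Applying the Simon-type inequalities for $|\xi|^{r-2}\xi$, namely the lower bound $c_r|\xi-\eta|^r$ when $r\geq 2$ and $c_r|\xi-\eta|^2/(|\xi|+|\eta|)^{2-r}$ when $1<r<2$, I would conclude $\nabla u_n\to\nabla u$ in $L^p(\Omega;\RN)$ and $\nabla u_n\to\nabla u$ in $L^q_\mu(\Omega)$. Together these give $\|\nabla u_n\|_p^p+\|\nabla u_n\|_{q,\mu}^q\to\|\nabla u\|_p^p+\|\nabla u\|_{q,\mu}^q$, whence $\|u_n\|_{1,\mathcal{H},0}\to\|u\|_{1,\mathcal{H},0}$ by \eqref{inequality_lp}. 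Since $W^{1,\mathcal{H}}_0(\Omega)$ is uniformly convex, weak convergence together with convergence of the norms yields the strong convergence $u_n\to u$ by the Radon--Riesz property.

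I expect the \emph{main obstacle} to lie in the final passage of the $(\Ss_+)$ argument, namely extracting genuine strong convergence of the gradients from the vanishing of the monotonicity pairing. In the pure $W^{1,p}$ setting this is classical, but here the mixed $p$- and $q$-growth governed by the weight $\mu$, together with the fact that $\|\cdot\|_{q,\mu}$ is only a seminorm, requires one to treat the two elliptic regimes simultaneously and recombine them through \eqref{inequality_lp}; care is needed so that the degeneracy of $\mu$ on the set $\{x\in\Omega:\mu(x)=0\}$ does not obstruct the conclusion.
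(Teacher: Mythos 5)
Your proof is correct, and it follows essentially the same route as the paper, which itself gives no argument but defers entirely to Liu--Dai \cite{Liu-Dai-2018}, where the standard proof you reconstruct (H\"older estimates for boundedness, the elementary vector inequality for monotonicity plus the monotone-hemicontinuous criterion for maximality, and Simon's inequalities to extract gradient convergence for $(\Ss_+)$) is carried out. One inessential detour: the final Radon--Riesz step is not needed, since once $\nabla u_n\to\nabla u$ in $L^p(\Omega;\RN)$ and in $L^q_\mu(\Omega)$, applying \eqref{inequality_lp} to the difference $u_n-u$ gives $\min\bigl\{\|u_n-u\|_{1,\mathcal{H},0}^p,\|u_n-u\|_{1,\mathcal{H},0}^q\bigr\}\leq\|\nabla(u_n-u)\|_p^p+\|\nabla(u_n-u)\|_{q,\mu}^q\to 0$, hence $u_n\to u$ in $W^{1,\mathcal{H}}_0(\Omega)$ directly.
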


The norm of $\R^N$ is denoted by $|\cdot|$ and $\cdot$ stands for the inner product in $\R^N$. For $s \in \R$, we set $s^{\pm}=\max\{\pm s,0\}$ and for $u \in \Wp{\mathcal{H}}$ we define $u^{\pm}(\cdot)=u(\cdot)^{\pm}$. It is well known that
\begin{align*}
    u^{\pm} \in \Wp{\mathcal{H}}, \quad |u|=u^++u^-, \quad u=u^+-u^-.
\end{align*}

\section{A priori estimates for double phase problems}\label{section_3}

In this section we present a priori bounds for double phase problems under very general conditions. Such a result is of independent interest and can be used for several problems of this type. We consider the problem
\begin{equation}\label{problem2}
    \begin{aligned}
	-\divergenz\left(|\nabla u|^{p-2}\nabla u+\mu(x) |\nabla u|^{q-2}\nabla u\right) & =g(x,u,\nabla u)\quad && \text{in } \Omega,\\
	u & = 0 &&\text{on } \partial \Omega,
    \end{aligned}
\end{equation}
where we suppose the following conditions on the function $g\colon\Omega\times\R\times\R^N\to \R$.
\begin{enumerate}
    \item[H($g$)]
	$g\colon\Omega\times\R\times\R^N\to \R$ is a Carath\'eodory function satisfying
	\begin{align*}
	    |g(x,s,\xi)| \leq c_1|\xi|^{p\frac{r-1}{r}}+c_2|s|^{r-1}+c_3,
	\end{align*}
	for a.\,a.\,$x\in\Omega$, for all $s \in \R$ and for all $\xi \in \R^N$ with positive constants $c_1,c_2, c_3$ and $q<r\leq p^*$ where $p^*$ is the critical exponent stated in \eqref{critical_exponent}.
\end{enumerate}

A function $u\in \Wpzero{\mathcal{H}}$ is called a weak solution of problem \eqref{problem2} if
\begin{align}\label{weak_solution2}
    \into \left(|\nabla u|^{p-2}\nabla u+\mu(x)|\nabla u|^{q-2}\nabla u \right)\cdot\nabla v\,dx=\into g(x,u,\nabla u)v\,dx
\end{align}
is satisfied for all test functions $v \in \Wpzero{\mathcal{H}}$.

The following theorem provides the boundedness of weak solutions of problem \eqref{problem2}.

\begin{theorem}
    Let $1<p<q<N$ and let hypotheses \eqref{condition_poincare} and H($g$) be satisfied. Then, each weak solution of problem \eqref{problem2} belongs to $\Linf$.
\end{theorem}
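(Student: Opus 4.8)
The plan is to prove the $L^\infty$-bound by Moser iteration, following the scheme of Marino--Winkert \cite{Marino-Winkert-2019} but adapted to the convection term in H($g$). Since $L^{\mathcal H}(\Omega)\hookrightarrow \Lp{p}$ and $\|\nabla u\|_p\le C\|\nabla u\|_{\mathcal H}$, every weak solution lies in $\Wpzero{p}$ and hence, by the classical Sobolev embedding, in $\Lp{p^*}$; this provides the starting integrability. As the estimate H($g$) is stated for $|g|$, it suffices to bound $|u|$, so I would work throughout with the truncation $u_M:=\min\{|u|,M\}$ for $M>0$ and an exponent $\kappa\ge 1$ that will later be pushed to infinity along a discrete sequence.

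For the iteration step I would test \eqref{weak_solution2} with $v=u\,u_M^{p(\kappa-1)}\in\Wpzero{\mathcal H}$, which is admissible because $u_M$ is bounded. Writing $\Phi_M:=u\,u_M^{\kappa-1}$ and computing $\nabla v$ and $\nabla\Phi_M$ on $\{|u|<M\}$ and $\{|u|\ge M\}$ separately, the left-hand side is a sum of two nonnegative contributions; dropping the nonnegative $\mu(x)|\nabla u|^q$-terms and comparing coefficients via $1+p(\kappa-1)\ge c\kappa$ yields the lower bound
\begin{align*}
    \into\left(|\nabla u|^{p-2}\nabla u+\mu(x)|\nabla u|^{q-2}\nabla u\right)\cdot\nabla v\,dx \ \ge\ c\,\kappa^{1-p}\into|\nabla\Phi_M|^p\,dx .
\end{align*}
Thus the double phase operator controls $\|\nabla\Phi_M\|_p^p$ up to a polynomial factor in $\kappa$.

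On the right-hand side I would insert H($g$), producing the terms $c_1\into|\nabla u|^{p\frac{r-1}{r}}|u|u_M^{p(\kappa-1)}\,dx$, $c_2\into|u|^{r}u_M^{p(\kappa-1)}\,dx$ and $c_3\into|u|u_M^{p(\kappa-1)}\,dx$. The convection term is the crux: I would split it by Young's inequality with exponents $\frac{r}{r-1}$ and $r$, chosen exactly so that the gradient factor becomes $|\nabla u|^p u_M^{p(\kappa-1)}\le|\nabla\Phi_M|^p$ while the remainder is $|u|^{r}u_M^{p(\kappa-1)}$. Choosing the Young parameter of order $\kappa^{1-p}$ lets the gradient part be absorbed into the left-hand side, at the cost of a constant of order $\kappa^{(p-1)(r-1)}$. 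Since $u_M^{p(\kappa-1)}\le|u|^{p(\kappa-1)}$ on $\{|u|\ge M\}$, the surviving terms are dominated by $\into|u|^{p\kappa+r-p}\,dx$, and after letting $M\to\infty$ (monotone convergence on the right, Fatou on the left) and applying $\|\Phi_M\|_{p^*}^p\le C\|\nabla\Phi_M\|_p^p$ I arrive at a recursive estimate of the form
\begin{align*}
    \|u\|_{\kappa p^*}^{\kappa p} \ \le\ C\,\kappa^{a}\left(\into|u|^{p\kappa+r-p}\,dx+1\right)
\end{align*}
with a fixed exponent $a>0$ independent of $\kappa$.

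Finally I would run the iteration by defining $\kappa_n$ through $p\kappa_{n+1}+r-p=\kappa_n p^*$, so that the integrability gained at one step feeds the right-hand side of the next. Because $p^*/p=\frac{N}{N-p}>1$, the sequence $\kappa_n$ grows geometrically, and the accumulated constants $\prod_n (C\kappa_n^{a})^{1/(\kappa_n p)}$ converge (their logarithms form a convergent series), giving a uniform bound on $\|u\|_{\kappa_n p^*}$ and hence $u\in\Linf$. The main obstacle I anticipate is the critical case $r=p^*$: there the map $\kappa\mapsto\frac{p^*}{p}\kappa-\frac{r-p}{p}$ has its fixed point exactly at the starting exponent, so the naive recursion gains no integrability. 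I would resolve this in the standard way by splitting $\into|u|^{p\kappa+r-p}\,dx$ over $\{|u|\ge K\}$ and $\{|u|<K\}$ and exploiting the absolute continuity of the integral, namely $\int_{\{|u|\ge K\}}|u|^{p^*}\,dx\to0$ as $K\to\infty$, to generate the smallness needed to absorb the borderline term and start the iteration.
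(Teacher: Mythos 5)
Your proposal is correct and follows essentially the same route as the paper: you test with the same family of functions $u\,u_M^{p(\kappa-1)}$ (the paper's $u u_h^{\kappa p}$, reparameterized), drop the nonnegative $\mu(x)|\nabla u|^q$-terms to reduce to the $p$-structure, and then run exactly the Moser iteration of Marino--Winkert \cite{Marino-Winkert-2019}, including their Young-absorption of the convection term and the absolute-continuity splitting in the critical case $r=p^*$. The only difference is presentational: the paper performs the initial reduction (sign splitting $u=u^+-u^-$ and discarding the $q$-terms) and then cites \cite{Marino-Winkert-2019} for the iteration, whereas you spell that iteration out.
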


\begin{proof}
        Let $u\in \Wp{\mathcal{H}}\subseteq \Wp{p}$ be a weak solution of problem \eqref{problem}. Since $u=u^+ -u^-$ we can suppose, without any loss of generality, that $u \geq 0$. Let $h>0$ and set $u_h=\min\{u,h\}$. Then we choose
        $v=uu_h^{\kappa p}$ with $\kappa>0$ as test function in \eqref{weak_solution2}. Note that $\nabla v=\nabla u u_h^{\kappa p}+u\kappa p u_h^{\kappa p-1}\nabla u_h$. This gives
    \begin{align}\label{M1}
      \begin{split}
	&\into |\nabla u|^{p} u_h^{\kappa p}\,dx+ \kappa p\int_\Omega  |\nabla u|^{p-2}\nabla u\cdot \nabla u_h u_h^{\kappa p-1}u\,dx\\
	&\quad+\into \mu(x)|\nabla u|^{q}u_h^{\kappa p}\,dx+ \kappa p\int_\Omega \mu(x) |\nabla u|^{q-2}\nabla u\cdot \nabla u_h u_h^{\kappa p-1}u\,dx\\
	&= \into g(x,u,\nabla u) uu_h^{\kappa p} \,dx.
      \end{split}
    \end{align}
    It is easy to see that the third and fourth integral on the left-hand side of \eqref{M1} are positive. Therefore, we have
    \begin{align*}
      \begin{split}
	&\into |\nabla u|^{p} u_h^{\kappa p}\,dx+ \kappa p\int_\Omega  |\nabla u|^{p-2}\nabla u\cdot \nabla u_h u_h^{\kappa p-1}u\,dx\\
	&\leq  \into g(x,u,\nabla u) uu_h^{\kappa p} \,dx.
      \end{split}
    \end{align*}
    Now we can proceed exactly as in the proof of Theorem 3.1 of Marino-Winkert \cite[starting with (3.2)]{Marino-Winkert-2019} to obtain that $u \in \Linf$.
\end{proof}

\section{Main results}

We assume the following hypotheses on the nonlinearity $f\colon \Omega \times \R \to \R$.

\begin{enumerate}[leftmargin=1cm]
    \item[H($f$)]
	$f\colon \Omega \times \R \to \R$ is a Carath\'{e}odory function such that
	\begin{enumerate}[leftmargin=0cm]
	    \item[(i)]		
                $f$ is bounded on bounded sets;
	    \item[(ii)]
	      \begin{align*}
		  \lim_{s \to \pm \infty}\,\frac{f(x,s)}{|s|^{q-2}s}=+\infty \quad\text{uniformly for a.\,a.\,}x\in\Omega;
	      \end{align*}
	      \item[(iii)]
	      \begin{align*}
		  \lim_{s \to 0}\,\frac{f(x,s)}{|s|^{p-2}s}=0 \quad\text{uniformly for a.\,a.\,}x\in\Omega.
	      \end{align*}
        \end{enumerate}
\end{enumerate}

\begin{example}
        The following functions $f_i\colon\Omega \times\R\to\R$ ($i=1,2,3$) satisfy hypotheses H($f$):
        \begin{align*}
                f_1(x,s)&=a(x)|s|^{r-2}s,\\
                f_2(x,s)&=a(x)|s|^{q-2}s\ln(1+|s|),\\
                f_3(x,s)&=
                \begin{cases}
                        |s|^{q-2}se^{-s-1} &\text{if } s<-1,\\
                        \frac{|s|^p}{2} ((s-1)\cos(s+1)+s+1)&\text{if } -1 \leq s \leq 1,\\
                        (1+(s-1)|x|)s^{q-1}e^{s-1} & \text{if } s>1,
                \end{cases}              
        \end{align*}
	where $a\in \Linf$ and $q<r<\infty$.
\end{example}

We say that $u\in \Wpzero{\mathcal{H}}$ is a weak solution of problem \eqref{problem} if it satisfies
\begin{align*}
    \into \left(|\nabla u|^{p-2}\nabla u+\mu(x)|\nabla u|^{q-2}\nabla u \right)\cdot\nabla v \,dx=\into\left(\lambda|u|^{p-2}u-f(x,u)\right) v \,dx
\end{align*}
for all test functions $v \in \Wpzero{\mathcal{H}}$.

Our main existence result reads as follows.

\begin{theorem}
    Let $1<p<q<N$, let hypotheses \eqref{condition_poincare} and H($f$) be satisfied and assume that $\lambda>\lambda_{1,p}$. Then problem \eqref{problem} admits at least two nontrivial weak solutions $u_+,u_- \in \Wpzero{\mathcal{H}}\cap \Linf$ such that $u_+\geq 0$ and $u_- \leq 0$ in $\Omega$.
\end{theorem}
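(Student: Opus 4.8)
The plan is to build the positive solution $u_+$ and the negative solution $u_-$ separately by the direct method applied to suitably truncated energy functionals, using the spectral gap $\lambda>\lambda_{1,p}$ to force nontriviality and the superlinearity H($f$)(ii) to force an a priori bound; I describe the construction of $u_+$, the one for $u_-$ being symmetric. Since H($f$) imposes no polynomial growth, the energy of \eqref{problem} need not be finite on all of $\Wpzero{\mathcal{H}}$, so I first truncate. Using H($f$)(ii) together with $q>p$, one finds $M>0$ with $\lambda s^{p-1}-f(x,s)\le 0$ for a.a.\ $x\in\Omega$ and all $s\ge M$. I then introduce the bounded Carath\'eodory reaction $k_+(x,s)$ equal to $0$ for $s\le 0$, to $\lambda s^{p-1}-f(x,s)$ for $0<s\le M$, and to the nonpositive constant $\lambda M^{p-1}-f(x,M)$ for $s>M$; boundedness follows from H($f$)(i). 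Writing $K_+(x,s)=\int_0^s k_+(x,t)\,dt$ and $F(x,s)=\int_0^s f(x,t)\,dt$, I set
\[
\varphi_+(u)=\frac1p\|\nabla u\|_p^p+\frac1q\|\nabla u\|_{q,\mu}^q-\into K_+(x,u)\,dx,\qquad u\in\Wpzero{\mathcal{H}}.
\]

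Because $k_+$ is bounded, $K_+$ grows at most linearly, so by \eqref{inequality_lp} and the continuous embedding into $\Lp{1}$ the functional $\varphi_+$ is coercive; it is sequentially weakly lower semicontinuous, since its leading part is convex and continuous while $u\mapsto\into K_+(x,u)\,dx$ is weakly continuous via the compact embedding $\Wpzero{\mathcal{H}}\hookrightarrow\Lp{1}$. Hence $\varphi_+$ attains a global minimizer $u_+$, which satisfies $\langle A(u_+),v\rangle_{\mathcal{H}}=\into k_+(x,u_+)v\,dx$ for all $v\in\Wpzero{\mathcal{H}}$. Testing with $v=-u_+^-$ and using $k_+(x,s)=0$ for $s\le 0$ gives $\|\nabla u_+^-\|_p^p+\|\nabla u_+^-\|_{q,\mu}^q=0$, whence $u_+\ge 0$.

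To see $u_+\neq 0$ I evaluate $\varphi_+$ along $tu_{1,p}$ for small $t>0$. Since $\|\nabla u_{1,p}\|_p^p=\lambda_{1,p}$ and $\|u_{1,p}\|_p=1$, and since $tu_{1,p}\le M$ for $t$ small, the $t^p$-contribution equals $\frac{t^p}{p}(\lambda_{1,p}-\lambda)<0$, the $\mu$-term is of order $t^q$ with $q>p$, and $\into F(x,tu_{1,p})\,dx=o(t^p)$ by H($f$)(iii); therefore $\varphi_+(tu_{1,p})<0=\varphi_+(0)$, so $\varphi_+(u_+)<0$ and $u_+\neq 0$. Finally, testing with $v=(u_+-M)^+$ and using $k_+(x,s)\le 0$ for $s>M$ forces $\int_{\{u_+>M\}}\bigl(|\nabla u_+|^p+\mu(x)|\nabla u_+|^q\bigr)\,dx\le 0$, hence $u_+\le M$ a.e.; on $\{0\le u_+\le M\}=\Omega$ one has $k_+(x,u_+)=\lambda u_+^{p-1}-f(x,u_+)$, so $u_+$ is a nontrivial weak solution of \eqref{problem} with $u_+\ge 0$, and the two-sided bound $0\le u_+\le M$ gives $u_+\in\Linf$ directly, consistent with the a priori estimate of Section \ref{section_3}.

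The negative solution $u_-$ is obtained from the mirror truncation: using H($f$)(ii) at $-\infty$ one picks $M'>0$ with $\lambda|s|^{p-2}s-f(x,s)\ge 0$ for $s\le -M'$, defines the corresponding $k_-$ supported in $\{s\le 0\}$, minimizes the analogous $\varphi_-$, tests with $v=u_-^+$ to obtain $u_-\le 0$, uses $\varphi_-(-tu_{1,p})<0$ for nontriviality, and runs the symmetric comparison to get $u_-\ge -M'$. The genuinely delicate points are the choice of the truncation level, where superlinearity together with $q>p$ is exactly what makes the cut-off reaction nonpositive beyond $M$, and the nontriviality estimate, whose sign rests on the strict inequality $\lambda>\lambda_{1,p}$ and on H($f$)(iii) dominating the subleading $t^q$ and $F$ terms; notably the superlinear growth itself never enters the variational argument, since it has been truncated away and is recovered only through the comparison $u_+\le M$. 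If strict sign is wanted, nonlinear regularity (Lieberman) and the strong maximum principle (Pucci--Serrin) upgrade $u_+$ and $-u_-$ to $\interior$.
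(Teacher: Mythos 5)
Your proposal is correct and follows essentially the same route as the paper's own proof: the same truncation of the reaction at a level $M=\overline{u}$ chosen via H($f$)(ii) with $q>p$, minimization of the truncated $C^1$-functional by coercivity and weak lower semicontinuity, tests with $-u_+^-$ and $(u_+-M)^+$ for the sign and the bound $u_+\leq M$, and nontriviality via $\varphi_+(tu_{1,p})<0$ using $\lambda>\lambda_{1,p}$ and H($f$)(iii) (your $o(t^p)$ bookkeeping is just the paper's explicit $\eps$--$\delta$ estimate with $\eps\in(0,\lambda-\lambda_{1,p})$). The only cosmetic difference is that you derive $u_+\in\Linf$ directly from $0\leq u_+\leq M$, as the paper also implicitly does, rather than invoking the Section \ref{section_3} a priori estimate.
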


\begin{proof}
    We first note that for each $a>0$, by applying H($f$)(ii), there exists a constant $M=M(a)>1$ such that
    \begin{align}\label{1}
	f(x,s)s \geq a |s|^{q}\quad\text{for a.\,a.\,}x\in\Omega \text{ and for all }|s|\geq M.
    \end{align}
    Taking $a=\lambda$ and a constant function $\overline{u}\in[M,\infty)$ and using \eqref{1}, $p<q$ and $M>1$, we have that
    \begin{align}\label{2}
	0 \geq \lambda \overline{u}^{p-1}-f(x,\overline{u}) \quad\text{for a.\,a.\,}x\in \Omega.
    \end{align}
    Now we introduce the truncation function $h_+\colon \Omega\times\R\to\R$ defined by
    \begin{align}\label{3}
	h_+(x,s)=
	\begin{cases}
	    0 &\text{if } s<0,\\
	    \lambda s^{p-1} -f(x,s) &\text{if }0\leq s \leq \overline{u},\\
	    \lambda \overline{u}^{p-1} -f(x,\overline{u})&\text{if }\overline{u}<s.
	\end{cases}
    \end{align}
    It is clear that $h_+$ is a Carath\'{e}odory function. We set $H_+(x,s)=\int^s_0 h_+(x,t)\,dt$ and consider the $C^1$-functional $\ph_+\colon \Wpzero{\mathcal{H}}\to\R$ given by
    \begin{align*}
	\ph_+(u)=\into \left[\frac{1}{p}|\nabla u|^p+\frac{\mu(x)}{q}|\nabla u|^q \right]\,dx -\into H_+(x,u)\,dx.
    \end{align*}
    Due to the truncation operator we see that the functional $\ph_+$ is coercive and since the embedding $\Wpzero{\mathcal{H}}\hookrightarrow \Lp{r}$ is compact, it is also sequentially weakly lower semicontinuous. Therefore, its global minimizer $u_+ \in \Wpzero{\mathcal{H}}$ exists, that is,
    \begin{align*}
	\ph_+(u_+)=\inf \left[ \ph_+(u)\colon u\in \Wpzero{\mathcal{H}}\right].
    \end{align*}
    Let us prove that this minimizer is nontrivial. To this end, due to H($f$)(iii), we find for each $\eps>0$ a number $\delta=\delta(\eps) \in (0,\overline{u})$ such that
    \begin{align}\label{5}
	F(x,s)\leq \frac{\eps}{p}|s|^p\quad\text{for a.\,a.\,}x\in \Omega \text{ and for all }|s|\leq \delta,
    \end{align}
    where $F(x,s)=\int_0^sf(x,t)\,dt$. We recall that $u_{1,p}\in \interior$ and so we can choose $t\in (0,1)$ small enough such that $tu_{1,p}(x) \in [0,\delta]$ for all $x\in\close$. Moreover, we know that $\|u_{1,p}\|_p=1$.

    Using this fact, \eqref{3}, \eqref{5} and the fact that $\delta<\overline{u}$ we obtain
    \begin{align}\label{6}
      \begin{split}
	\ph_+\left(t u_{1,p}\right)
	& =\into \left[\frac{1}{p}|\nabla (t u_{1,p})|^p+\frac{\mu(x)}{q}|\nabla (t u_{1,p})|^q \right]\,dx -\into H_+(x,t u_{1,p})\,dx\\
	& \leq \frac{t^p}{p}\left\|\nabla u_{1,p}\right\|_p^p+\frac{t^q}{q}\left\|\nabla u_{1,p}\right\|_{q,\mu}^q-\frac{\lambda t^p}{p}+\frac{\eps t^p}{p}\\
	&\leq  \frac{t^p \lambda_{1,p}}{p}+\frac{t^q}{q} \left\|\nabla u_{1,p}\right\|_{q,\mu}^q -\frac{\lambda t^p}{p}+\frac{\eps t^p}{p}\\
	& = t^p \left[\frac{\lambda_{1,p}-\lambda +\eps}{p} \right]+\frac{t^q}{q} \left\|\nabla u_{1,p}\right\|_{q,\mu}^q.
      \end{split}
    \end{align}
    Since $\lambda > \lambda_{1,p}$ we can choose $\eps \in (0, \lambda-\lambda_{1,p})$ in \eqref{6} which shows that (note that $p<q$)
    \begin{align*}
	\ph_+(tu_{1,p})<0=\ph_+(0) \quad\text{for sufficiently small }t>0.
    \end{align*}
    Therefore, $u_+ \neq 0$.

    Since $u_+$ is a global minimizer of $\ph_+$ we have $(\ph_+)'(u_+)=0$, that is,
    \begin{align}\label{7}
      \begin{split}
	& \into \left(|\nabla u_+|^{p-2}\nabla u_++\mu(x)|\nabla u_+|^{q-2}\nabla u_+ \right)\cdot\nabla v \,dx =\into h_+(x,u_+)v \,dx
      \end{split}
    \end{align}
    for all $v \in \Wpzero{\mathcal{H}}$. Taking $v=-(u_+)^-$ as test function in \eqref{7} gives $(u_+)^-=0$ and so $u_+\geq 0$. Next, we choose $v=(u_+-\overline{u})^+ \in \Wpzero{\mathcal{H}}$ as test function in \eqref{7}. This leads to
    \begin{align*}
      \begin{split}
	& \into \left(|\nabla u_+|^{p-2}\nabla u_++\mu(x)|\nabla u_+|^{q-2}\nabla u_+ \right)\cdot\nabla (u_+-\overline{u})^+ \,dx\\
	& =\into h_+(x,u_+)(u_+-\overline{u})^+ \,dx\\
	& =\into\left( \lambda \overline{u}^{p-1}-f(x, \overline{u})\right)(u_+-\overline{u})^+ \,dx\\
	& \leq 0,
      \end{split}
    \end{align*}
    by \eqref{2}. Hence, from the last inequality we derive
    \begin{align*}
      \begin{split}
	& \int_{\{u_+>\overline{u}\}} |\nabla u_+|^{p}\,dx+\int_{\{u_+>\overline{u}\}}\mu(x)|\nabla u_+|^{q}\,dx
	\leq 0.
      \end{split}
    \end{align*}
    Therefore, $u_+\leq \overline{u}$. Then, with view to the truncation function defined in \eqref{3}, we see that $u_+ \in \Wpzero{\mathcal{H}}\cap \Linf$ is a weak solution of our original problem. 

    The proof for the negative solution works in a similar way. Indeed, as a conclusion of \eqref{1}, we can choose $a=\lambda$ and $\underline{v} \in (-\infty, -M]$ such that
    \begin{align*}
       0 \leq \lambda |\underline{v}|^{p-2}\underline{v}-f(x,\underline{v}) \quad \text{for a.a.\,}x \in \Omega.
    \end{align*}
    Then, we introduce the truncation $h_-\colon \Omega\times\R\to\R$ defined by
    \begin{align*}
	h_-(x,s)=
	\begin{cases}
	    \lambda |\underline{v}|^{p-2}\underline{v}-f(x,\underline{v}) \qquad & \text{if } s<\underline{v},\\
	    \lambda |s|^{p-2}s-f(x,s) & \text{if }\underline{v} \leq s \leq 0,\\
	    0 & \text{if } 0<s,
	\end{cases}
    \end{align*}
    and the $C^1$-functional $\varphi_{-}\colon \Wpzero{\mathcal{H}} \to \R$ defined by
    \begin{align*}
	\ph_-(u)=\into \left[\frac{1}{p}|\nabla u|^p+\frac{\mu(x)}{q}|\nabla u|^q \right]\,dx -\into H_-(x,u) \,dx,
    \end{align*}
    where $H_-(x,s)=\int_0^s h_-(x,t)dt$. In the same way we prove that $\ph_-$ has a global minimizer $u_-$ which turns out to be a nontrivial, negative, bounded weak solution of \eqref{problem}.
\end{proof}

\begin{remark}
        It would also be interesting to obtain sign-changing solutions for problem \eqref{problem}. As for now, the methods based on extremal solutions cannot be applied due to the lack of regularity results for double phase problems. Moreover, the application of the Nehari manifold method does not seem to work for our problem because of the sign of the nonlinearity $f$, 
         compare with the assumption in Gasi\'nski-Papageorgiou \cite{Gasinski-Papageorgiou-2019} and Liu-Dai \cite{Liu-Dai-2018}.
\end{remark}

\section*{Acknowledgment}

The authors wish to thank knowledgeable referees for their corrections and remarks.

\end{document}